\theoremstyle{plain}
\newtheorem{theorem}{Theorem}[section]
\newtheorem{proposition}[theorem]{Proposition}
\newtheorem{corollary}[theorem]{Corollary}
\theoremstyle{definition}
\newcommand{\f}{\varphi}
\newcommand{\CC}{\mathbb C}
\newcommand{\PP}{\mathbb P}
\newcommand{\MM}{\mathbf M}
\newcommand{\E}{{\mathcal E}}
\newcommand{\F}{{\mathcal F}}
\def\O{\mathcal O}
\newcommand{\Ker}{{\mathcal Ker}}
\newcommand{\Coker}{{\mathcal Coker}}
\newcommand{\Image}{{\mathcal Im}}
\newcommand{\Aut}{\operatorname{Aut}}
\newcommand{\Grass}{\operatorname{Grass}}
\newcommand{\Hom}{\operatorname{Hom}}
\def\H{\operatorname{H}}
\newcommand{\M}{\operatorname{M}}
\newcommand{\Poly}{\operatorname{P}}
\newcommand{\spann}{\operatorname{span}}
\newcommand{\Sym}{\operatorname{S}}
\newcommand{\tensor}{\otimes}
\newcommand{\lra}{\longrightarrow}
\begin{document}

\title[Moduli of sheaves supported on curves of genus $2$ in $\PP^1 \times \PP^1$]
{On the geometry of the moduli space of sheaves supported on curves of genus two in a quadric surface}

\author{Mario Maican}
\address{Institute of Mathematics of the Romanian Academy, Calea Grivitei 21, Bucharest 010702, Romania}

\email{maican@imar.ro}

\begin{abstract}
We study the moduli space of stable sheaves of Euler characteristic $2$,
supported on curves of arithmetic genus $2$ contained in a smooth quadric surface.
We show that this moduli space is rational.
We compute its Betti numbers and we give a classification of the stable sheaves involving locally free resolutions.
\end{abstract}

\subjclass[2010]{Primary 14D20, 14D22}
\keywords{Moduli of sheaves, Semi-stable sheaves}

\maketitle

\section{Introduction}
\label{introduction}

Consider the quadric surface $\PP^1 \times \PP^1$ defined over $\CC$ with fixed polarization $\O(1, 1)$.
The Hilbert polynomial of a coherent algebraic sheaf $\F$ on $\PP^1 \times \PP^1$ is a polynomial $P$ in two variables with rational
coefficients which satisfies $P(m, n) = \chi(\F(m, n))$ for all integers $m$, $n$.
For a given $P$ we denote by $\M(P)$ the moduli space of sheaves on $\PP^1 \times \PP^1$
that are semi-stable relative to the fixed polarization and that have Hilbert polynomial $P$.
In this paper we will be concerned with the geometry of $\MM = \M(3m + 2n + 2)$.
The sheaves in $\MM$ have Euler characteristic $2$ and are supported on curves of bidegree $(2, 3)$.
According to \cite{lepotier}, $\MM$ is an irreducible smooth projective variety of dimension $13$.
In the following theorem we classify the sheaves in $\MM$.

\begin{theorem}
\label{main_theorem}
We have a decomposition of $\MM$ into an open subvariety $\MM_0$,
a closed smooth irreducible subvariety $\MM_1$ of codimension $1$, and a closed smooth irreducible subvariety $\MM_2$ of codimension $2$.
The subvarieties are defined as follows: $\MM_0 \subset \MM$ is the subset of sheaves $\F$ having a resolution of the form
\[
0 \lra \O(-1, -2) \oplus \O(-1, -1) \overset{\f}{\lra} 2\O \lra \F \lra 0
\]
with $\f_{12}$ and $\f_{22}$ linearly independent; $\MM_1 \subset \MM$ is the subset of sheaves $\F$ having a resolution of the form
\[
0 \lra \O(-2, -1) \oplus \O(-1, -2) \overset{\f}{\lra} \O(-1, -1) \oplus \O(0, 1) \lra \F \lra 0
\]
with $\f_{11} \neq 0$, $\f_{12} \neq 0$;
$\MM_2 \subset \MM$ is the subset of twisted structure sheaves $\O_C(1, 0)$ for a curve $C \subset \PP^1 \times \PP^1$ of bidegree $(2, 3)$.

The subvariety $\MM_1$ is isomorphic to the universal curve of bidegree $(2, 3)$ and is the Brill-Noether locus of sheaves $\F$
satisfying $\H^0(\F(0, -1)) \neq 0$ (for $\F \in \MM_1$ we have $\H^0(\F(0, -1)) \simeq \CC$);
$\MM_2$ is isomorphic to $\PP^{11}$ and is the Brill-Noether locus of sheaves $\F$ satisfying $\H^0(\F(-1, 0)) \neq 0$ (for $\F \in \MM_2$ we have
$\H^0(\F(-1, 0)) \simeq \CC$).
\end{theorem}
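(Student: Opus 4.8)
The plan is to produce, for every sheaf $\F \in \MM$, a locally free resolution of one of three shapes by running the Beilinson spectral sequence on $\PP^1 \times \PP^1$ (equivalently, by resolving the structure sheaf of the diagonal), using semistability to control the cohomological data entering the $E_1$-page. Since by \cite{lepotier} the variety $\MM$ is irreducible, smooth and of dimension $13$, it then suffices to exhibit a dense open stratum $\MM_0$ of dimension $13$ together with two further strata of dimensions $12$ and $11$. The first input is a list of cohomological estimates: a sheaf $\F \in \MM$ is pure of dimension one with support of bidegree $(2,3)$, so $\H^i(\F(a,b)) = 0$ for $i \ge 2$ and $\chi(\F(a,b)) = 3a + 2b + 2$; comparing reduced Hilbert polynomials of subsheaves and quotients, together with the Künneth formula, gives $\H^0(\F(-1,-1)) = 0$ and $\H^1(\F) = 0$ (so $h^0(\F) = 2$), and shows that among the small twists the only invariants not yet pinned down are $\varepsilon_1 := h^0(\F(0,-1))$ and $\varepsilon_2 := h^0(\F(-1,0))$. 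A further semistability argument --- a nonzero section of $\F(0,-1)$ or $\F(-1,0)$ produces a subsheaf of bounded multiplicity, two independent ones a destabilising subsheaf --- forces $\varepsilon_1,\varepsilon_2 \in \{0,1\}$ with $\varepsilon_1\varepsilon_2 = 0$, leaving exactly three cohomological types.

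Feeding each vanishing pattern into the Beilinson monad, whose terms are $\O(-1,-1)$, $\O(-1,0)$, $\O(0,-1)$, $\O$, and computing the $E_1$-differentials, I would obtain the resolutions. When $\varepsilon_1 = \varepsilon_2 = 0$ the monad degenerates to $0 \lra K \lra 2\O \lra \F \lra 0$ with $K = \ker\bigl(3\O(-1,-1) \lra \O(-1,0)\bigr)$; purity of $\F$ forces the latter map to be surjective, hence $K \simeq \O(-1,-2) \oplus \O(-1,-1)$ and $\F$ acquires the resolution defining $\MM_0$, the linear independence of $\f_{12}$ and $\f_{22}$ corresponding to stability. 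When $\varepsilon_2 = 1$ the ideal sheaf sequence of $C := \operatorname{Supp}\F$ gives $\F \simeq \O_C(1,0)$ directly. When $\varepsilon_1 = 1$, after a mutation of the exceptional collection that brings in $\O(0,1)$, the monad simplifies to the resolution defining $\MM_1$, with $\f_{11} \ne 0$, $\f_{12} \ne 0$ again the stability condition. Conversely, for any $\f$ of one of the prescribed shapes satisfying the nondegeneracy condition, $\det\f$ cuts out a curve of bidegree $(2,3)$ and an inspection of destabilising subsheaves shows that $\operatorname{coker}\f$ is stable, so it belongs to the corresponding stratum.

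For the geometry of the strata, each stratum is the image of an open subset of a $\Hom$-space under the action of the automorphism group of the two terms of the resolution; a dimension count of these spaces and groups gives $\dim\MM_0 = 13$, $\dim\MM_1 = 12$, $\dim\MM_2 = 11$. For $\MM_2$, the assignment $C \mapsto \O_C(1,0)$ is an isomorphism $\PP^{11} = \PP\H^0(\O(2,3)) \overset{\sim}{\lra} \MM_2$, with inverse the Fitting support. For $\MM_1$, the pair $\bigl(C = \{\det\f = 0\},\ p = Z(\f_{11}) \cap Z(\f_{12})\bigr)$ --- and $p$ lies on $C$ because the top row of $\f$ vanishes there --- defines a morphism to the universal curve of bidegree $(2,3)$; over the locus of smooth $C$ the corresponding sheaf is the line bundle $\O_C(0,1)(p)$, and the resolution extends this identification over all of the universal curve, so the morphism is an isomorphism. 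Since $\MM_0$ is a nonempty open subset (the nondegeneracy conditions being open) of the irreducible $13$-dimensional $\MM$, it is dense; the loci $\MM_1 = \{\F : h^0(\F(0,-1)) \ge 1\}$ and $\MM_2 = \{\F : h^0(\F(-1,0)) \ge 1\}$ are closed by semicontinuity, disjoint because $\varepsilon_1\varepsilon_2 = 0$, and smooth of codimension $1$ and $2$ by the two isomorphisms.

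It remains to read off the Brill--Noether groups: twisting the resolutions of types $0$ and $1$ by $\O(-1,0)$ and by $\O(0,-1)$ and using Künneth vanishing gives $\H^0(\F(-1,0)) = 0$ on $\MM_0 \cup \MM_1$ and $\H^0(\F(0,-1)) = 0$ on $\MM_0$; the vanishing of $\H^0(\F(0,-1))$ on $\MM_2$ reduces to $\H^0(\O_C(1,-1)) = 0$, again a Künneth vanishing off the ideal sheaf sequence of $C$; and the same computations give $\H^0(\F(0,-1)) \simeq \CC$ on $\MM_1$ and $\H^0(\F(-1,0)) = \H^0(\O_C) \simeq \CC$ on $\MM_2$. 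The main obstacle is the cohomological case analysis: showing that semistability leaves no possibilities for $(\varepsilon_1,\varepsilon_2)$ beyond the three listed and that no other monad shape survives, and verifying that the classifying morphism $\MM_1 \to (\text{universal curve})$ is an isomorphism rather than merely a bijection --- which requires controlling how the type-$1$ resolution varies in families over the singular members of $C$. The bookkeeping is made delicate by the number of small line bundles $\O(a,b)$ on the quadric that must be tracked simultaneously.
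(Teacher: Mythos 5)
Your outline follows essentially the same route as the paper: semistability-driven vanishing results ($\H^0(\F(-1,-1))=0$, $\H^1(\F)=0$, the analysis of $h^0(\F(0,-1))$ and $h^0(\F(-1,0))$), the Beilinson-type (Buchdahl) spectral sequence on $\PP^1 \times \PP^1$ to produce the resolution on the open stratum, converse stability checks by lifting a destabilizing subsheaf to a map of resolutions, and the identifications $\MM_2 \simeq \PP^{11}$, $\MM_1 \simeq$ universal curve. The only genuinely cosmetic difference is the $\varepsilon_1 = 1$ case: the paper does not mutate the exceptional collection but first identifies $\F$ as a non-split extension of $\CC_p$ by $\O_C(0,1)$ and splices the resolutions of these two sheaves, using non-splitness to cancel the $\O(-2,-2)$ terms; and the disjointness $\varepsilon_1\varepsilon_2 = 0$ falls out of the classification (one checks $\H^0(\O_C(1,-1)) = 0$) rather than a separate two-sections argument.

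Two steps need repair as written. First, in the case $\varepsilon_1 = \varepsilon_2 = 0$, purity of $\F$ does not force the map $3\O(-1,-1) \to E_1^{-1,1} \simeq \O(-1,0)$ to be surjective: if its three components span only a one-dimensional subspace of $V_2^*$, the cokernel is $\O(-1,0)$ restricted to the zero divisor of the generator, i.e. a sheaf isomorphic to $\O_{\PP^1}(-1)$, which is pure of dimension one, so purity gives no contradiction. What rules this out is semistability: by the convergence exact sequence $\F$ would surject onto this quotient of slope $0 < 2/5$, which is the argument the paper actually uses. Second, the isomorphism $\PP^{11} \simeq \MM_2$, $C \mapsto \O_C(1,0)$, presupposes that $\O_C(1,0)$ is semistable for \emph{every} curve $C$ of bidegree $(2,3)$, including non-reduced and reducible ones; your converse paragraph only verifies stability of cokernels of the two resolution shapes, so this case still requires its own destabilizing-subsheaf analysis (the paper imports it from the argument showing $\O_C(0,1)$ is semistable). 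With these two points fixed, the proposal matches the paper's proof in substance.
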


\noindent
The proof of the above theorem takes up Section \ref{classification} and uses a spectral sequence, discovered in \cite{buchdahl},
which is similar to the Beilinson spectral sequence on the projective space. We introduce this spectral sequence in Section \ref{preliminaries}.
As a consequence of Theorem \ref{main_theorem}, we will show in Section \ref{homology} that $\MM_0$ is obtained from a $\PP^9$-bundle over the
Grassmannian of planes in $\CC^4$ by removing two disjoint subvarieties isomorphic to $\PP^1$, respectively, to $\PP^1 \times \PP^1$.
This allows us to compute the Betti numbers of $\MM$.
The varieties $X$ occurring in this paper will have no odd homology, so it is convenient to work with the Poincar\'e polynomial
\[
\Poly(X)(\xi) = \sum_{i \ge 0} \dim_{\mathbb Q}^{} \H^i(X, {\mathbb Q}) \xi^{i/2}.
\]

\begin{theorem}
\label{poincare_polynomial}
The Poincar\'e polynomial of $\MM$ is
\[
\xi^{13} + 3 \xi^{12} + 8 \xi^{11} + 10 \xi^{10} + 11 \xi^9 + 11 \xi^8 + 11 \xi^7 + 11 \xi^6 + 11 \xi^5 + 11 \xi^4 + 10 \xi^3 + 8 \xi^2 + 3 \xi + 1.
\]
The Euler characteristic of $\MM$ is $110$.
\end{theorem}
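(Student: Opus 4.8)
The plan is to read off $\Poly(\MM)$ from the stratification $\MM = \MM_0 \sqcup \MM_1 \sqcup \MM_2$ of Theorem \ref{main_theorem}, together with the description of $\MM_0$ recorded in the introduction, using that the (virtual) Poincar\'e polynomial is additive along decompositions into locally closed subvarieties and multiplicative along Zariski-locally-trivial fibrations. Every variety entering the computation --- projective spaces, the Grassmannian $\Grass(2,4)$ of $2$-planes in $\CC^4$, and the projective bundles over them appearing below --- has cohomology concentrated in even degrees and of Tate type, and $\MM$, being smooth projective and exhausted by such strata, shares this property; hence $\Poly(\MM)$, and $\Poly$ of each proper piece used below, is this additive--multiplicative invariant, while the open stratum $\MM_0$ will contribute only through its virtual class. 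Equivalently, I would run the bookkeeping in the Grothendieck ring of varieties and then pass to Poincar\'e polynomials.

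First I would dispose of the two Brill-Noether strata. By Theorem \ref{main_theorem} one has $\MM_2 \cong \PP^{11}$, hence $\Poly(\MM_2) = 1 + \xi + \dots + \xi^{11}$. Again by Theorem \ref{main_theorem}, $\MM_1$ is the universal curve of bidegree $(2,3)$; since $\H^0(\O(2,3))$ is $12$-dimensional, curves of bidegree $(2,3)$ form a linear system $\PP^{11}$, and the universal curve, projected to $\PP^1 \times \PP^1$, is the projectivization of the kernel of the evaluation epimorphism $\H^0(\O(2,3)) \tensor \O \lra \O(2,3)$, hence a Zariski-locally-trivial $\PP^{10}$-bundle over $\PP^1 \times \PP^1$. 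Therefore $\Poly(\MM_1) = (1 + \xi)^2(1 + \xi + \dots + \xi^{10})$.

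Next I would compute $\Poly(\MM_0)$. Multiplicativity together with $\Poly(\Grass(2,4)) = 1 + \xi + 2\xi^2 + \xi^3 + \xi^4$ gives, for the $\PP^9$-bundle $\mathcal B$ over $\Grass(2,4)$ of the introduction, $\Poly(\mathcal B) = (1 + \xi + 2\xi^2 + \xi^3 + \xi^4)(1 + \xi + \dots + \xi^9)$. Since $\MM_0$ is obtained from $\mathcal B$ by deleting two disjoint subvarieties isomorphic to $\PP^1$ and to $\PP^1 \times \PP^1$, additivity yields $\Poly(\MM_0) = \Poly(\mathcal B) - (1 + \xi) - (1 + \xi)^2$. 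Summing, $\Poly(\MM) = \Poly(\MM_0) + \Poly(\MM_1) + \Poly(\MM_2)$; expanding and collecting coefficients produces the polynomial in the statement, and setting $\xi = 1$ gives the Euler characteristic $(6 \cdot 10 - 2 - 4) + 4 \cdot 11 + 12 = 110$.

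The only genuinely delicate point is the hypothesis underpinning all of this: that $\MM$ and its strata have no odd cohomology and are of Tate type, so that $\Poly(\MM)$ really is the additive--multiplicative invariant assembled above. This is exactly what the geometric structure provides --- each stratum is a Zariski-locally-trivial projective-space fibration over a product of Grassmannians and projective spaces, possibly with a lower-dimensional locus of the same shape removed, so it is cohomologically linear, and $\MM$ inherits the property from the stratification (in agreement with the statement in the introduction that the varieties occurring here have no odd homology). Granting this, the additivity is justified either through the Grothendieck-ring/weight-polynomial formalism or by observing that the Gysin sequences of the successive open--closed decompositions split into short exact sequences precisely because all their terms are of Tate type; the remaining polynomial arithmetic is then routine.
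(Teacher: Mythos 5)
Your proposal is correct and follows essentially the same route as the paper: additivity over the stratification $\MM = \MM_0 \sqcup \MM_1 \sqcup \MM_2$, with $\MM_2 \simeq \PP^{11}$, $\MM_1$ the universal curve contributing $\Poly(\PP^{10})\Poly(\PP^1 \times \PP^1)$, and $\MM_0$ contributing $\Poly(\PP^9)\Poly(\Grass(2,4)) - \Poly(\PP^1) - \Poly(\PP^1 \times \PP^1)$, exactly as in the paper's computation. Your extra care in justifying additivity (Tate-type cohomology, Grothendieck-ring bookkeeping) only makes explicit what the paper leaves implicit.
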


\noindent
Remarkably, $\Poly(\MM)$ coincides with $\Poly(\M(3m + 2n + 1))$, which was computed in \cite[Section 9.2]{choi_katz_klemm}
and in \cite{genus_two} by different methods.


\section{Preliminaries}
\label{preliminaries}

Let $\F$ be a coherent sheaf on $\PP^1 \times \PP^1$ with support of dimension $1$.
According to \cite[Lemma 1]{buchdahl}, there is a spectral sequence converging to $\F$, whose $E_1$-term has display diagram
\begin{equation}
\label{E_1}
\xymatrix
{
\H^1(\F(-1, -1)) \tensor \O(-1, -1) = E_1^{-2, 1} \ar[r]^-{\f_1} & E_1^{-1, 1} \ar[r]^-{\f_2} & E_1^{0, 1} = \H^1(\F) \tensor \O \\
\H^0(\F(-1, -1)) \tensor \O(-1, -1) = E_1^{-2, 0} \ar[r]^-{\f_3} & E_1^{-1, 0} \ar[r]^-{\f_4} & E_1^{0, 0} = \H^0(\F) \tensor \O
}
\end{equation}
In addition, we have the exact sequences
\begin{equation}
\label{E_1^{-1,0}}
\H^0(\F(0, -1)) \tensor \O(0, -1) \lra E_1^{-1, 0} \lra \H^0(\F(-1, 0)) \tensor \O(-1, 0),
\end{equation}
\begin{equation}
\label{E_1^{-1,1}}
\H^1(\F(0, -1)) \tensor \O(0, -1) \lra E_1^{-1, 1} \lra \H^1(\F(-1, 0)) \tensor \O(-1, 0).
\end{equation}
The convergence of the spectral sequence implies that $\f_2$ is surjective and that we have the exact sequence
\begin{equation}
\label{convergence}
0 \lra \Ker(\f_1) \stackrel{\f_5}{\lra} \Coker(\f_4) \lra \F \lra \Ker(\f_2)/\Image(\f_1) \lra 0.
\end{equation}

Let $\E$ be a semi-stable sheaf on $\PP^1 \times \PP^1$ with Hilbert polynomial $P_{\E}(m,n) = rm + n + 1$.
According to \cite[Proposition 11]{ballico_huh}, $\E$ has resolution
\begin{equation}
\label{(r,1)}
0 \lra \O(-1, -r) \lra \O \lra \E \lra 0.
\end{equation}
Let $\E$ be a semi-stable sheaf on $\PP^1 \times \PP^1$ with $P_{\E}(m,n) = m + sn + 1$.
Then $\E$ has resolution
\begin{equation}
\label{(1,s)}
0 \lra \O(-s, -1) \lra \O \lra \E \lra 0.
\end{equation}

We fix vector spaces $V_1$ and $V_2$ over $\CC$ of dimension $2$ and we make the identification
$\PP^1 \times \PP^1 = \PP(V_1) \times \PP(V_2)$. We fix bases $\{ x, y \}$ of $V_1^*$ and $\{ z, w \}$ of $V_2^*$.


\section{Classification of sheaves}
\label{classification}

\begin{proposition}
\label{vanishing}
Let $\F$ give a point in $\MM$. Then
\begin{enumerate}
\item[(i)] $\H^0(\F(-1, -1)) = 0$;
\item[(ii)] $\H^1(\F) = 0$;
\item[(iii)] $\H^0(\F(-1,0)) \neq 0$ if and only if $\F \simeq \O_C(1, 0)$ for a curve $C \subset \PP^1 \times \PP^1$ of bidegree $(2, 3)$;
\item[(iv)] $\H^0(\F(0, -1)) \neq 0$ if and only if $\F$ is a non-split extension of the form
\[
0 \lra \O_C(0, 1) \lra \F \lra \CC_p \lra 0
\]
for a curve $C \subset \PP^1 \times \PP^1$ of bidegree $(2, 3)$ and a point $p \in C$, if and only if $\F$ has a resolution
\begin{equation}
\label{M_1}
0 \lra \O(-2, -1) \oplus \O(-1, -2) \overset{\f}{\lra} \O(-1, -1) \oplus \O(0, 1) \lra \F \lra 0
\end{equation}
with $\f_{11} \neq 0$, $\f_{12} \neq 0$.
\end{enumerate}
\end{proposition}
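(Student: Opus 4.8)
Parts~(i), (ii), the ``only if'' half of~(iii), and the first ``only if'' in~(iv) all follow from one principle, which I would set up first. Because the linear part of $P_\F = 3m+2n+2$ has coprime coefficients, every point of $\MM$ is stable, supported on a curve of bidegree $(2,3)$, with reduced Hilbert polynomial of slope $2/5$ relative to the multiplicity; hence any subsheaf of $\F$ generated by one section of a twist $\F(a,b)$ is of the form $\O_D(a',b')$ for an effective divisor $D\subset\PP^1\times\PP^1$ of some bidegree $(d_1,d_2)$ with $(d_1,d_2)\neq(0,0)$, $d_1\le 2$, $d_2\le 3$, whose reduced Hilbert polynomial is computed from $P_{\O_D}(m,n)=d_2m+d_1n+(d_1+d_2-d_1d_2)$. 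A nonzero section of $\F(-1,-1)$ gives such an $\O_D(1,1)$ of slope $2-d_1d_2/(d_1+d_2)\ge 4/5>2/5$, contradicting stability; this proves~(i). For~(ii), Serre duality on the surface identifies $\H^1(\F)$ with $\H^0(\F^\dual)$, where $\F^\dual=\mathcal{E}xt^1(\F,\omega)$ is again stable, with the same support and $\chi(\F^\dual)=-2$, so of slope $-2/5$; the same finite check ($1-d_1d_2/(d_1+d_2)\ge -1/5>-2/5$) rules out a nonzero section. For the ``only if'' in~(iii) and in~(iv), the analogue for $\F(-1,0)$, respectively $\F(0,-1)$, produces a slope exceeding $2/5$ for every $(d_1,d_2)$ except $(2,3)$, so $D$ must be the support curve $C$; an Euler characteristic comparison then gives $\O_C(1,0)=\F$ in the first case (as $\chi(\O_C(1,0))=2=\chi(\F)$), and in the second a short exact sequence $0\to\O_C(0,1)\to\F\to\mathcal{Q}\to 0$ with $\mathcal{Q}$ of length $\chi(\F)-\chi(\O_C(0,1))=1$, i.e. $\mathcal{Q}\simeq\CC_p$, $p\in C$; the extension is non-split because $\F$ is pure. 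The converses are one-line cohomology computations: $\F(-1,0)=\O_C$ has a section when $\F\simeq\O_C(1,0)$, and twisting $0\to\O_C(0,1)\to\F\to\CC_p\to 0$ by $\O(0,-1)$ shows $\H^0(\F(0,-1))\supseteq\H^0(\O_C)\neq 0$.

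It remains to prove, in~(iv), the equivalence between the extension form and the resolution~(\ref{M_1}). Given~(\ref{M_1}) with $\f_{11}\neq 0$, $\f_{12}\neq 0$, I would factor out the summand $\O(0,1)$ of the target: the identity component cancels and the residual map $(\f_{11},\f_{12})\colon\O(-2,-1)\oplus\O(-1,-2)\to\O(-1,-1)$, with $\f_{11},\f_{12}$ nonzero linear forms in $x,y$ and $z,w$, has cokernel $\CC_p$ for $p$ their common zero; this presents $\F$ as an extension of $\CC_p$ by the image of $\O(0,1)$, which one computes to be $\O_C(0,1)$ for the bidegree-$(2,3)$ curve $C=\{\f_{21}\f_{12}-\f_{22}\f_{11}=0\}$ (a nonzero section since $\f$ is injective), with $p\in C$. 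Conversely, from $0\to\O_C(0,1)\to\F\to\CC_p\to 0$ (with $p\in C$ forced by purity), I would use the resolution $0\to\O(-2,-2)\overset{g}{\lra}\O(0,1)\to\O_C(0,1)\to 0$, $g$ cutting out $C$. Adjunction gives $\omega_C\simeq\O_C(0,1)$, so by Serre duality on $C$, $\Ext^1(\O(-1,-1),\O_C(0,1))\simeq\H^1(\O_C(1,2))\simeq\H^0(\O_C(-1,-1))^\vee=0$; hence a chosen surjection $\O(-1,-1)\to\CC_p$ lifts to $\psi\colon\O(-1,-1)\to\F$, and $\psi$ together with $\O(0,1)\to\O_C(0,1)\hookrightarrow\F$ gives a surjection $E_0:=\O(-1,-1)\oplus\O(0,1)\to\F$ whose kernel $K$ is locally free of rank $2$ (a pure one-dimensional sheaf on a smooth surface has a length-one locally free resolution, so such a kernel is locally free). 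The snake lemma for $0\to\O(0,1)\to E_0\to\O(-1,-1)\to 0$ mapping onto the extension yields $0\to\O(-2,-2)\to K\to\I_p(-1,-1)\to 0$.

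The crux is identifying $K$. Twisting the Koszul resolution of $p$ gives $0\to\O(-2,-2)\to\O(-2,-1)\oplus\O(-1,-2)\to\I_p(-1,-1)\to 0$, and from it together with $\H^0(\O(-1,0))=\H^0(\O(0,-1))=\H^1(\O(-1,0))=\H^1(\O(0,-1))=0$ one finds $\Ext^1(\I_p(-1,-1),\O(-2,-2))\simeq\Ext^1(\I_p,\O(-1,-1))\simeq\CC$. So an extension of $\I_p(-1,-1)$ by $\O(-2,-2)$ has middle term either $\O(-2,-2)\oplus\I_p(-1,-1)$ (split, not locally free) or $\O(-2,-1)\oplus\O(-1,-2)$ (any nonsplit class, realized by the Koszul resolution); as $K$ is locally free, $K\simeq\O(-2,-1)\oplus\O(-1,-2)$. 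Finally the composite $K\hookrightarrow E_0\to\O(-1,-1)$ is the row $(\f_{11},\f_{12})$, with cokernel $\O(-1,-1)/\I_p(-1,-1)\simeq\CC_p$ of length one, which forces $\f_{11}\neq 0$ and $\f_{12}\neq 0$ (vanishing of either makes the cokernel a twisted structure sheaf of a curve). I expect this identification of $K$ --- especially the one-dimensionality of $\Ext^1(\I_p(-1,-1),\O(-2,-2))$ and the observation that local freeness selects the nonsplit type --- to be the only delicate step; the rest is diagram chasing and Hilbert-polynomial arithmetic. Alternatively, (\ref{M_1}) can be produced by running the spectral sequence~(\ref{E_1})--(\ref{convergence}): by (i) and (ii) the corner terms $E_1^{-2,0}$ and $E_1^{0,1}$ vanish, $E_1^{0,0}=2\O$ and $E_1^{-2,1}=3\O(-1,-1)$, and the exact sequences for the middle column, in the case $\H^0(\F(0,-1))\neq 0$ and $\H^0(\F(-1,0))=0$, deliver it directly --- this being the method applied systematically in the rest of the section.
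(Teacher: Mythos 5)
Your proposal is correct in substance but takes a genuinely different route from the paper's. For (i), (ii) and the forward halves of (iii)--(iv) the paper simply cites \cite{genus_two} (Propositions 3.3 and 3.5 and Corollary 3.4 there), whereas you re-derive everything from one principle: a nonzero section of a twist of $\F$ generates a subsheaf $\O_D(a,b)$ with $D$ an effective divisor of bidegree at most $(2,3)$, after which a finite slope check against $2/5$ does the work; for (ii) you additionally invoke Serre duality together with the standard (correct, but nontrivial and uncited-in-the-paper) fact that the dual sheaf $\F^\dual=\mathcal{E}xt^1(\F,\omega)$ of a semi-stable pure one-dimensional sheaf is again semi-stable with $\chi(\F^\dual)=-\chi(\F)$. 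For the passage from the extension $0\to\O_C(0,1)\to\F\to\CC_p\to0$ to resolution (\ref{M_1}), the paper splices the resolutions of $\O_C(0,1)$ and $\CC_p$ (a horseshoe argument) and cancels the redundant $\O(-2,-2)$, the cancelling entry being nonzero precisely because the extension is non-split; you instead lift a surjection $\O(-1,-1)\to\CC_p$ to $\F$ using $\Ext^1(\O(-1,-1),\O_C(0,1))\simeq\H^1(\O_C(1,2))=0$, obtain $0\to\O(-2,-2)\to K\to\I_p(-1,-1)\to0$ with $K$ locally free, and use $\dim\Ext^1(\I_p(-1,-1),\O(-2,-2))=1$ to force $K\simeq\O(-2,-1)\oplus\O(-1,-2)$. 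Both identifications work; the paper's is shorter, while yours makes the role of purity explicit (local freeness of $K$ selecting the non-split class) and delivers $\f_{11}\neq0$, $\f_{12}\neq0$ automatically from the length-one cokernel of the top row. Your reverse passage from (\ref{M_1}) to the extension (project onto the $\O(-1,-1)$ summand, Koszul complex of $(\f_{11},\f_{12})$, kernel of $\F\to\CC_p$ equal to the image of $\O(0,1)$, namely $\O_C(0,1)$ with $C=\{\det\f=0\}$) is sound, though the phrase ``the identity component cancels'' is a slip: there is no identity entry in $\f$; that cancellation belongs to the paper's direction of the argument.

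One caveat worth flagging: since you read the proposition with $\F\in\MM$ given, your converse halves of (iii) and (iv) reduce to trivial cohomology computations. The paper's proof of those converses actually establishes the stronger facts that every $\O_C(1,0)$, and every cokernel of a matrix as in (\ref{M_1}) with $\f_{11}\neq0$, $\f_{12}\neq0$, is semi-stable; these strengthenings are what the paper uses immediately afterwards to identify $\MM_2\simeq\PP^{11}$ and $\MM_1$ with the universal curve, so under your reading they would still need to be proved separately.
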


\begin{proof}
Parts (i) and (ii) follow directly from part (i), respectively, from part (ii) of \cite[Proposition 3.3]{genus_two}.

\medskip

\noindent
(iii) Assume that $\H^0(\F(-1,0)) \neq 0$. There is a curve $C$ and an injective morphism $\O_C \to \F(-1, 0)$
(consult the proof of \cite[Proposition 3.3]{genus_two}).
From Table 1 in the proof of \cite[Proposition 3.5]{genus_two} we see that the only case
in which $\O_C(1, 0)$ does not destabilize $\F$ is when $\deg (C) = (2, 3)$. Thus $\F \simeq \O_C(1, 0)$.
Conversely, given any curve $C \subset \PP^1 \times \PP^1$ of bidegree $(2, 3)$, $\O_C(1, 0)$ is semi-stable.
This can be shown by the same argument from the proof of \cite[Proposition 3.5]{genus_two} by which proves that $\O_C(0, 1)$ is semi-stable.

\medskip

\noindent
(iv) Assume that $\H^0(\F(0, -1)) \neq 0$.
As at (iii) above we can show that $\F$ is a non-split extension of $\CC_p$ by $\O_C(0, 1)$.
Combining the resolutions
\[
0 \lra \O(-2, -2) \lra \O(0, 1) \lra \O_C(0, 1) \lra 0
\]
and
\[
0 \lra \O(-2, -2) \lra \O(-2, -1) \oplus \O(-1, -2) \lra \O(-1, -1) \lra \CC_p \lra 0
\]
we obtain the resolution
\[
0 \to \O(-2, -2) \to \O(-2, -2) \oplus \O(-2, -1) \oplus \O(-1, -2) \to \O(-1, -1) \oplus \O(0, 1) \to \F \to 0.
\]
The map $\O(-2, -2) \to \O(-2, -2)$ is non-zero, otherwise the extension of $\CC_p$ by $\O_C(0, 1)$ would split
(consult the proof of \cite[Proposition 2.3.2]{illinois}).
Thus, we obtain resolution (\ref{M_1}).

Conversely, we assume that $\F$ is given by resolution (\ref{M_1}) and we need to show that $\F$ is semi-stable.
Note first that $\H^0(\F)$ generates a subsheaf of $\F$ of the form $\O_C(0, 1)$.
Assume that $\F$ had a destabilizing subsheaf $\E$. Without loss of generality we may take $\E$ to be semi-stable.
Then $\H^0(\E) \neq \H^0(\F)$, otherwise $\E \simeq \O_C(0, 1)$, which does not destabilize $\F$.
Thus, $\H^0(\E) \simeq \CC$ and $\chi(\E) = 1$.
According to \cite[Corollary 3.4]{genus_two}, $\E$ cannot have Hilbert polynomial $2m + 1$ or $2n + 1$.
Thus, $P_{\E} = n + 1$, $m + 1$ or $m + n + 1$.
If $P_{\E} = n + 1$, then resolution (\ref{(r,1)}) with $r = 0$ fits into a commutative diagram
\[
\xymatrix
{
0 \ar[r] & \O(-1, 0) \ar[r] \ar[d]^-{\beta} & \O \ar[r] \ar[d]^-{\alpha} & \E \ar[r] \ar[d] & 0 \\
0 \ar[r] & \O(-2, -1) \oplus \O(-1, -2) \ar[r] & \O(-1, -1) \oplus \O(0, 1) \ar[r] & \F \ar[r] & 0
}
\]
with $\alpha \neq 0$. Thus $\alpha$ is injective, hence $\beta$ is injective, too, which is absurd.
If $P_{\E} = m+1$ or $m + n + 1$ we get the same contradiction using resolution (\ref{(1,s)}) with $s = 0$, respectively, with $s = 1$.
\end{proof}

\noindent
Let $\MM_2 \subset \MM$ be the subset of sheaves from Proposition \ref{vanishing}(iii). Clearly, $\MM_2 \simeq \PP^{11}$.
Let $\MM_1 \subset \MM$ be the subset of sheaves from Proposition \ref{vanishing}(iv).
Clearly, $\MM_1$ is isomorphic to the universal curve of bidegree $(2, 3)$ in $\PP^1 \times \PP^1$,
so $\MM_1$ has codimension $1$.
Let $\MM_0 \subset \MM$ be the open subset of sheaves $\F$ for which $\H^0(\F(-1, 0)) = 0$ and $\H^0(\F(0, -1)) = 0$.

\begin{proposition}
\label{sheaves_M_0}
The sheaves in $\MM_0$ are precisely the sheaves $\F$ having a resolution of the form
\begin{equation}
\label{M_0}
0 \lra \O(-1, -2) \oplus \O(-1, -1) \overset{\f}{\lra} 2\O \lra \F \lra 0
\end{equation}
with $\f_{12}$ and $\f_{22}$ linearly independent.
\end{proposition}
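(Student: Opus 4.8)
The plan is to apply the Buchdahl spectral sequence \eqref{E_1} to a sheaf $\F \in \MM_0$ and extract the announced resolution, then prove the converse by showing that every $\F$ with resolution \eqref{M_0} and the nondegeneracy condition on $\f_{12}, \f_{22}$ is semi-stable and lies in $\MM_0$. First I would collect the cohomological input: by Proposition \ref{vanishing}(i)--(ii) we have $\H^0(\F(-1,-1)) = 0$ and $\H^1(\F) = 0$, and by the definition of $\MM_0$ together with Proposition \ref{vanishing}(iii)--(iv) we have $\H^0(\F(-1,0)) = 0$ and $\H^0(\F(0,-1)) = 0$. From Riemann--Roch (the Hilbert polynomial $3m + 2n + 2$) together with these vanishings one computes the remaining cohomology groups: $\H^0(\F) \simeq \CC^2$, and via \eqref{E_1^{-1,1}} and the values of $\H^1(\F(0,-1))$, $\H^1(\F(-1,0))$ one identifies $E_1^{-1,1}$. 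The key point is that the vanishing of $\H^0(\F(-1,-1))$ and $\H^1(\F)$ kills the two corners $E_1^{-2,0}$ and $E_1^{0,1}$, so the spectral sequence \eqref{E_1} collapses dramatically: $\f_3$, $\f_4$ are maps out of / into zero, $\f_2$ is automatically zero (its target vanishes), and the convergence sequence \eqref{convergence} degenerates to a short exact sequence $0 \to E_1^{-2,1} \overset{\f_1}{\to} E_1^{-1,1} \to \F \to 0$ provided $\f_1$ is injective and $\f_2 = 0$ forces $\Ker(\f_2)/\Image(\f_1)$ to be computed correctly; one must check $\Ker(\f_1) = 0$ using that a nonzero kernel would be a subsheaf of $\O(-1,-1)^{\oplus k}$ mapping to torsion, contradicting torsion-freeness considerations, and that the first row's middle term has no $\O(0,-1)$ or $\O(-1,0)$ summands because of the $\H^0(\F(0,-1)) = \H^0(\F(-1,0)) = 0$ vanishings fed through \eqref{E_1^{-1,0}}.

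Once the resolution $0 \to E_1^{-2,1} \to E_1^{-1,1} \to \F \to 0$ is in hand, the next step is to identify the two bundles. From the dimension count $E_1^{-2,1} = \H^1(\F(-1,-1)) \tensor \O(-1,-1)$, so I need $\dim \H^1(\F(-1,-1)) = 2$, which again follows from Riemann--Roch and $\H^0(\F(-1,-1)) = 0$. For the middle term, \eqref{E_1^{-1,1}} presents $E_1^{-1,1}$ as an extension of $\H^1(\F(-1,0)) \tensor \O(-1,0)$ by $\H^1(\F(0,-1)) \tensor \O(0,-1)$; computing these two $\H^1$ dimensions (one of them will be $0$, the other $1$, forcing $E_1^{-1,1}$ to be a line bundle of the appropriate bidegree) — wait, that gives a rank-$1$ middle term, which is too small. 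The correct reading is that the spectral sequence also involves the $E_1^{0,0} = \H^0(\F) \tensor \O = 2\O$ term, so in fact $\f_4\colon E_1^{-1,0} \to 2\O$ is the relevant map and the resolution should be read off the bottom row, not the top. So I would instead run the bottom row: with $E_1^{-2,0} = 0$ and $E_1^{0,0} = 2\O$, convergence gives $0 \to E_1^{-1,0} \overset{\f_4}{\to} 2\O \to \F \to 0$ once one checks $\f_4$ is injective (kernel would be supported in dimension $\le 0$ inside a direct sum of line bundles, hence $0$) and that the top-row contribution $\Ker(\f_1)/\cdots$ to \eqref{convergence} vanishes — but here $E_1^{-2,1}$ need not vanish, so the honest statement is that \eqref{convergence} with the corner vanishings gives $\F$ as $\Coker(\f_4)$ modulo a quotient controlled by the top row, and one argues the top row contributes nothing to the torsion-free sheaf $\F$ because $\Ker(\f_2)/\Image(\f_1)$ would be a subsheaf supported in low dimension. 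Then \eqref{E_1^{-1,0}} together with $\H^0(\F(0,-1)) = \H^0(\F(-1,0)) = 0$ shows $E_1^{-1,0}$ is built from... and by Riemann--Roch on the twists $\F(-1,0)$, $\F(0,-1)$ one pins down $E_1^{-1,0} \simeq \O(-1,-2) \oplus \O(-1,-1)$, giving resolution \eqref{M_0}.

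For the nondegeneracy condition and the converse, I would argue as follows. Given the resolution \eqref{M_0}, the cokernel $\F$ is supported on $\{\det \f = 0\}$, a curve of bidegree $(2,3)$, and $\F$ has the right Hilbert polynomial. If $\f_{12}$ and $\f_{22}$ were linearly dependent, a column or row operation would split off a trivial summand $\O \to \O$, producing a subsheaf or quotient that destabilizes — this recovers exactly the boundary $\MM_1 \cup \MM_2$, so the open condition "$\f_{12}, \f_{22}$ linearly independent" is forced for $\F \in \MM_0$ and conversely guarantees stability together with $\H^0(\F(-1,0)) = \H^0(\F(0,-1)) = 0$. Concretely, for the converse I would verify semi-stability by the same destabilizing-subsheaf analysis as in the proof of Proposition \ref{vanishing}(iv): assume a semi-stable destabilizer $\E$ with $P_\E$ among $n+1$, $m+1$, $m+n+1$ (the cases $2m+1$, $2n+1$ excluded by \cite[Corollary 3.4]{genus_two}), write down resolution \eqref{(r,1)} or \eqref{(1,s)}, and lift to a commutative diagram with \eqref{M_0}; the linear independence of $\f_{12}, \f_{22}$ obstructs the existence of the lift, giving a contradiction. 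Finally, applying the spectral-sequence argument above to this $\F$ shows $\H^0(\F(-1,0)) = \H^0(\F(0,-1)) = 0$ (equivalently, by Proposition \ref{vanishing}, $\F \notin \MM_1 \cup \MM_2$), so $\F \in \MM_0$. The main obstacle I anticipate is the bookkeeping in the spectral sequence: correctly identifying which row of \eqref{E_1} yields the resolution, verifying injectivity of the relevant differential, and confirming that the complementary row contributes nothing to $\F$ — all of which hinge on carefully using torsion-freeness of $\F$ together with the four vanishing statements.
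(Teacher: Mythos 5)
Your converse direction is essentially the paper's argument (a semi-stable destabilizer $\E$ with $\chi(\E)=1$ and Hilbert polynomial $n+1$, $m+1$ or $m+n+1$, lifted through the resolutions (\ref{(r,1)}), (\ref{(1,s)}) to a diagram that contradicts the linear independence of $\f_{12},\f_{22}$), and that part is fine. The genuine gap is in the forward direction, where your final reading of the spectral sequence is inconsistent. For $\F\in\MM_0$ the sequence (\ref{E_1^{-1,0}}), together with $\H^0(\F(0,-1))=\H^0(\F(-1,0))=0$, forces $E_1^{-1,0}=0$; so your plan to ``pin down $E_1^{-1,0}\simeq\O(-1,-2)\oplus\O(-1,-1)$'' and read the resolution off the bottom row as $0\to E_1^{-1,0}\to 2\O\to\F\to 0$ cannot work --- that term is zero, and Riemann--Roch on the twists $\F(-1,0)$, $\F(0,-1)$ only confirms this. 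The kernel bundle in (\ref{M_0}) does not come from $E_1^{-1,0}$ at all: with $E_1^{-1,0}=0$ and $E_1^{0,1}=0$, the convergence sequence (\ref{convergence}) reads $0\to\Ker(\f_1)\to 2\O\to\F\to\Coker(\f_1)\to 0$, where $\f_1\colon E_1^{-2,1}\simeq 3\O(-1,-1)\to E_1^{-1,1}$, and one identifies $E_1^{-1,1}\simeq\O(-1,0)$ by computing $P_{E_1^{-1,1}}=mn+m$ from this four-term sequence and using that (\ref{E_1^{-1,1}}) gives an injection $E_1^{-1,1}\hookrightarrow\O(-1,0)$ (its left-hand term vanishes because $\H^1(\F(0,-1))=0$).

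Your substitute justification that the top row ``contributes nothing to the torsion-free sheaf $\F$ because $\Ker(\f_2)/\Image(\f_1)$ would be a subsheaf supported in low dimension'' fails on both counts: since $E_1^{0,1}=0$, this term equals $\Coker(\f_1)$, which is a \emph{quotient} of $\F$, not a subsheaf, and it need not have low-dimensional support --- if the three elements of $V_2^*$ defining $\f_1$ span only a line, then $\Coker(\f_1)$ is a pure one-dimensional sheaf (a twist of the structure sheaf of a curve of bidegree $(0,1)$) of slope $0$, which purity cannot rule out; also $\F$, being one-dimensional on a surface, is not torsion-free, so torsion-freeness is not the right tool anywhere here. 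The step must instead use semi-stability: such a quotient would destabilize $\F$, so $\f_1$ is equivalent to $(1\tensor z,1\tensor w,0)$, whence $\f_1$ is surjective, $\Coker(\f_1)=0$, and $\Ker(\f_1)\simeq\O(-1,-2)\oplus\O(-1,-1)$, which plugged into the four-term sequence yields (\ref{M_0}). Finally, at the end of the converse, membership in $\MM_0$ (not merely in $\MM$) should be checked by twisting (\ref{M_0}) by $(0,-1)$ and $(-1,0)$ and taking cohomology, rather than by ``re-running the spectral-sequence argument,'' which presupposes exactly the vanishings you want to establish.
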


\begin{proof}
Assume that $\F$ gives a point in $\MM_0$. From the exact sequence (\ref{E_1^{-1,0}}) we get $E_1^{-1, 0} = 0$.
In view of Proposition \ref{vanishing}, display diagram (\ref{E_1}) takes the form
\[
\xymatrix
{
3\O(-1, -1) \ar[r]^-{\f_1} & E_1^{-1, 1} & 0 \\
0 & 0 & 2\O
}
\]
hence exact sequence (\ref{convergence}) becomes
\[
0 \lra \Ker(\f_1) \lra 2\O \lra \F \lra \Coker(\f_1) \lra 0.
\]
From this exact sequence we can compute the Hilbert polynomial of $E_1^{-1, 1}$:
\begin{align*}
P_{E_1^{-1, 1}} & = P_{\Image(\f_1)} + P_{\Coker(\f_1)} \\
& = P_{3\O(-1, -1)} - P_{\Ker(\f_1)} + P_{\F} - P_{2\O} + P_{\Ker(\f_1)} \\
& = P_{3\O(-1, -1)} + P_{\F} - P_{2\O} \\
& = 3mn + 3m + 2n + 2 - 2(m + 1)(n + 1) \\
& = mn + m.
\end{align*}
The exact sequence (\ref{E_1^{-1,1}}) takes the form
\[
0 \lra E_1^{-1, 1} \lra \O(-1, 0).
\]
Since $P_{E_1^{-1, 1}} = P_{\O(-1, 0)}$ it follows that $E_1^{-1, 1} \simeq \O(-1, 0)$.
If $\f_1 = (1 \tensor u, 0 , 0)$, then $\Coker(\f_1)$ has slope zero, so it is a destabilizing quotient sheaf of $\F$.
Thus, $\f_1 = (1 \tensor z, 1 \tensor w, 0)$, $\Ker(\f_1) \simeq \O(-1, -2) \oplus \O(-1, -1)$ and the exact sequence (\ref{convergence})
yields resolution (\ref{M_0}).

Conversely, we assume that $\F$ is given by resolution (\ref{M_0}) and we need to show that $\F$ is semi-stable.
Assume that $\F$ had a destabilizing subsheaf $\E$. Since $\H^0(\F)$ generates $\F$, $\H^0(\E) \simeq \CC$, hence $\chi(\E) = 1$.
As in the proof of Proposition \ref{vanishing}(iv), we have a commutative diagram with exact rows
\[
\xymatrix
{
0 \ar[r] & {\mathcal B} \ar[r] \ar[d]^-{\beta} & \O \ar[r] \ar[d]^-{\alpha} & \E \ar[r] \ar[d] & 0 \\
0 \ar[r] & \O(-1, -2) \oplus \O(-1, -1) \ar[r] & 2\O \ar[r] & \F \ar[r] & 0
}
\]
in which ${\mathcal B}$ is one of the sheaves $\O(-1, 0)$, $\O(0, -1)$, or $\O(-1, -1)$,
and in which $\alpha \neq 0$. Thus $\alpha$ is injective, hence $\beta$ is injective, too.
If ${\mathcal B} = \O(-1, 0)$ or $\O(0, -1)$, then $\beta$ cannot be injective, so we get a contradiction.
If ${\mathcal B} = \O(-1, -1)$, then the hypothesis that $\f_{12}$ and $\f_{22}$ be linearly independent gets contradicted.
\end{proof}


\section{The Homology of $\MM$}
\label{homology}

Let $W \subset \Hom(\O(-1, -2) \oplus \O(-1, -1), 2\O)$ be the subset of morphisms $\f$ for which $\f_{12}$ and $\f_{22}$ are linearly independent
and such that there is no $u \in V_2^*$ satisfying $\f_{11} = \f_{12} (1 \tensor u)$, $\f_{21} = \f_{22} (1 \tensor u)$.
Consider the algebraic group
\[
G = \big( \Aut(\O(-1, -2) \oplus \O(-1, -1)) \times \Aut(2\O) \big)/\CC^*
\]
acting on $W$ by conjugation. Let $W_0 \subset W$ be the open invariant subset of injective morphisms.
Note that $W_0$ is the set of morphisms $\f$ occurring at resolution (\ref{M_0}).

\begin{proposition}
\label{quotient_M_0}
There exists a geometric quotient $W/G$, which is a fiber bundle with fiber $\PP^9$ and base 
the Grassmann variety $\Grass(2, V_1^* \tensor V_2^*)$.
Thus, there exists $W_0/G$ as a proper open subset of $W/G$.
Moreover, $W_0/G_0$ is isomorphic to $\MM_0$.
\end{proposition}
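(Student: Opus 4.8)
The plan is to describe the geometric quotient $W/G$ explicitly as the projectivization of a rank‑$10$ bundle on $\Grass(2, V_1^* \tensor V_2^*)$, and then to recognize the open subset $W_0/G$ inside it as $\MM_0$. First I would fix notation: write $2\O = \CC^2 \tensor \O$, so $\Aut(2\O) = GL_2(\CC)$, and note $\Aut(\O(-1,-2) \oplus \O(-1,-1)) = (\CC^* \times \CC^*) \ltimes V_2^*$ (the lower‑triangular matrices, since $\Hom(\O(-1,-2),\O(-1,-1)) = V_2^*$ and $\Hom(\O(-1,-1),\O(-1,-2)) = 0$). Put $M = \H^0(\O(1,1)) = V_1^* \tensor V_2^*$ and $A = \H^0(\O(1,2)) = V_1^* \tensor \Sym^2 V_2^*$. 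A morphism $\f$ is recorded by its two columns $\psi_1 = (\f_{11},\f_{21}) \in \CC^2 \tensor A$ and $\psi_2 = (\f_{12},\f_{22}) \in \CC^2 \tensor M$, and $G$ acts by $\psi_2 \mapsto d\,g\psi_2$, $\psi_1 \mapsto g(a\psi_1 + c\psi_2)$, with $g \in GL_2(\CC)$ and $(a,d,c)$ as above. For $\f \in W$ the column $\psi_2$ has maximal rank as an element of $\CC^2 \tensor M = \Hom((\CC^2)^\dual, M)$, hence determines a $2$‑plane $U_\f \subset M$ and an isomorphism $\theta\colon (\CC^2)^\dual \to U_\f$; the remaining defining condition of $W$ says precisely that $\psi_1 \in \Hom((\CC^2)^\dual, A)$ does not lie in the $2$‑plane $\Lambda_\f := \{\,(\,\cdot\, c)|_{U_\f} \circ \theta : c \in V_2^*\,\}$. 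The assignment $\f \mapsto U_\f$ is then a $G$‑invariant morphism $\pi\colon W \to \Grass(2, V_1^* \tensor V_2^*)$, and everything below is relative to $\pi$.

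Next I would build the bundle. Let $\mathcal S \subset M \tensor \O$ be the tautological rank‑$2$ subbundle on $\Grass(2, M)$. Multiplication $V_2^* \tensor M \to A$, restricted to $\mathcal S$ and transposed, gives a bundle map $V_2^* \tensor \O \to \mathcal S^\dual \tensor A$ whose value at $[U]$ sends $c$ to $(\ell \mapsto c\ell)\colon U \to A$; this is injective on every fibre because $\CC[x,y,z,w]$ has unique factorization. Hence its image is a rank‑$2$ subbundle $\mathcal L \subset \mathcal S^\dual \tensor A$, and $\mathcal Q := (\mathcal S^\dual \tensor A)/\mathcal L$ is locally free of rank $12 - 2 = 10$; $\PP(\mathcal Q)$ is a Zariski‑locally trivial $\PP^9$‑bundle over $\Grass(2, V_1^* \tensor V_2^*)$. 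I claim $W/G = \PP(\mathcal Q)$. Over $W$ the universal $\psi_2$ identifies $\pi^*\mathcal S$ with $(\CC^2)^\dual \tensor \O_W$, hence $\pi^*(\mathcal S^\dual \tensor A)$ with $\CC^2 \tensor A \tensor \O_W$ and $\pi^*\mathcal L$ with the subbundle of fibre $\Lambda_\f$; the universal $\psi_1$ defines a section of $\pi^*(\mathcal S^\dual \tensor A)$ whose image in $\pi^*\mathcal Q$ is nowhere zero (that is exactly the last condition defining $W$), hence a morphism $\rho\colon W \to \PP(\mathcal Q)$ over the Grassmannian. A short computation with the formulas above shows $\rho$ is $G$‑invariant: $GL_2(\CC)$ moves $\psi_1$ and $\theta$ compatibly and leaves $\psi_1 \circ \theta^{-1}$ unchanged, while $(a,d,c)$ multiplies $\psi_1 \circ \theta^{-1}$ by $a/d$ modulo $\mathcal L$.

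It then remains to see $\rho$ is a geometric quotient and to pass to $W_0$. One checks $\rho$ is surjective with fibres exactly the $G$‑orbits (using that two maximal‑rank $\psi_2$ with the same image differ by a unique $g \in GL_2(\CC)$, and then matching the $\psi_1$'s up to the scalar and the translation by $\Lambda_\f$), and that $G$ acts freely: if $(h,g)$ stabilizes $\f$, then $g\psi_2 = d\psi_2$ forces $g = d\cdot\mathrm{id}$, and $(d-a)\psi_1 = c\psi_2$ together with $\psi_1 \notin \Lambda_\f$ forces $a = d$, $c = 0$, so the stabilizer is precisely the central $\CC^*$ quotiented out in $G$. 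Since $\dim W = 20 = 13 + 7 = \dim\PP(\mathcal Q) + \dim G$, $\rho$ is a smooth surjection with free $G$‑orbit fibres, and it is a geometric quotient: over an affine open of $\Grass(2,M)$ on which $\mathcal S$ has a frame, $GL_2(\CC)$ normalizes $\psi_2$ and $\rho$ becomes the tautological quotient of (a trivial rank‑$10$ bundle minus its zero section) by the fibrewise action of $\CC^* \ltimes V_2^*$ (scaling and translation). Thus $W/G$ is a $\PP^9$‑bundle over $\Grass(2, V_1^* \tensor V_2^*)$. Now $W_0 \subset W$ is open and $G$‑invariant, so $W_0/G = \rho(W_0)$ is open in $W/G$; it is proper, since the morphism with $\f_{11} = xw^2$, $\f_{12} = xz$, $\f_{21} = yw^2$, $\f_{22} = yz$ lies in $W$ but has $\det\f = 0$, so it is outside $W_0$. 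Finally, the cokernels over $W_0$ form a flat family of stable sheaves lying in $\MM_0$ by Proposition \ref{sheaves_M_0}, giving a $G$‑invariant surjective classifying morphism $W_0 \to \MM_0$ whose fibres are the $G$‑orbits: a resolution of type (\ref{M_0}) is unique up to automorphisms of its terms, and $\Hom(2\O,\O(-1,-2)\oplus\O(-1,-1)) = 0$ makes every automorphism of the cokernel lift uniquely, so each fibre is an $\Aut(\F) = \CC^*$‑torsor equal to a $G$‑orbit. By the standard identification of such classifying morphisms with geometric quotients (cf. \cite{genus_two}, \cite{illinois}), $W_0 \to \MM_0$ is a geometric quotient; since $W_0/G \subset \PP(\mathcal Q)$ is one as well, $\MM_0 \cong W_0/G$.

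The parts I expect to require the most care are, first, the verification that $\rho$ is a well‑defined morphism and genuinely $G$‑invariant — the bookkeeping relating the frame $\theta$ to the $GL_2(\CC)$‑action invites sign and variance errors — and, more seriously, the step promoting ``$\rho$ is a smooth surjection with free $G$‑orbit fibres'' to ``$\rho$ is the geometric quotient,'' that is, the Zariski‑local triviality of $\rho$ as a $G$‑torsor. This is exactly where one must use the affine charts of the Grassmannian on which $\mathcal S$ is trivial, and it is what makes the $\PP^9$‑bundle statement rigorous rather than merely set‑theoretic.
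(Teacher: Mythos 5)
Your proof is correct and follows essentially the route the paper itself only sketches by citation: it realizes $W/G$ as the projectivization of a rank-$10$ quotient bundle over $\Grass(2, V_1^* \tensor V_2^*)$ (your $\mathcal{Q}$ is precisely the bundle $\mathcal{U}$ of Chung--Moon referred to in the text), and then identifies $W_0/G$ with $\MM_0$ by the standard uniqueness-of-resolution argument, exactly as in the cited analogue from the Illinois paper. The only detail worth adding is the vanishing $\Ext^1(2\O, \O(-1,-2)\oplus\O(-1,-1)) = 0$, which guarantees the existence (not just uniqueness) of lifts of isomorphisms of cokernels to the resolutions.
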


\noindent
The proof of this proposition is analogous to the proof of \cite[Proposition 3.2.2]{illinois}.
In point of fact, $W/G$ is isomorphic to the projectivization of the bundle ${\mathcal U}$ over $\Grass(2, 4)$
introduced at \cite[Section 2.1]{chung_moon}. A natural birational map from $\M(3m + 2n + 1)$ to $\PP({\mathcal U})$
is constructed in \cite{chung_moon}, hence we have a natural birational map from $\M(3m + 2n + 1)$ to $\MM$.

\begin{corollary}
\label{rational}
The variety $\MM$ is rational.
\end{corollary}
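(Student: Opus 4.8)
The plan is to deduce rationality of $\MM$ from the birational model produced in Proposition \ref{quotient_M_0}. First I would record that $\MM_0$ is dense in $\MM$: by Theorem \ref{main_theorem} its complement $\MM_1 \cup \MM_2$ is closed of positive codimension, and $\MM$ is irreducible, so any dense open subset — in particular $\MM_0$ — carries the birational type of $\MM$. Since rationality is a birational invariant, it suffices to show that $\MM_0$ is rational.

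Next I would feed in Proposition \ref{quotient_M_0}, which identifies $\MM_0$ with an open subset of the geometric quotient $W/G$, and which moreover exhibits $W/G$ as a fiber bundle with fiber $\PP^9$ over $\Grass(2, V_1^* \tensor V_2^*) \simeq \Grass(2, 4)$. As the remark following that proposition makes precise, $W/G$ is the projectivization $\PP(\mathcal U)$ of the rank-$10$ vector bundle $\mathcal U$ of \cite{chung_moon}, so the bundle is Zariski-locally trivial: over a dense open $U \subset \Grass(2, 4)$ it restricts to $U \times \PP^9$, whence $W/G$ is birational to $U \times \PP^9$.

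Finally I would chain the standard reductions. The Grassmannian $\Grass(2, 4)$ is rational, being covered by affine cells, so $U$ is rational and $U \times \PP^9$ is birational to $\mathbb A^{\dim U} \times \PP^9$, which is rational; hence $W/G$ is rational, hence so is its open subset corresponding to $\MM_0$, and therefore $\MM$ is rational. I do not expect a genuine obstacle here: every step is routine birational geometry, and all the substantive content — the locally free resolution of a generic sheaf (Proposition \ref{sheaves_M_0}), the GIT quotient, and the projective-bundle structure over the Grassmannian (Proposition \ref{quotient_M_0}) — is already in hand. The only point deserving a word of care is that one must use Zariski-local triviality of the $\PP^9$-bundle, which is precisely what the comparison with $\PP(\mathcal U)$ provides; an étale-local trivialization alone would not immediately yield rationality.
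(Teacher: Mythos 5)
Your proposal is correct and follows essentially the same route as the paper, which deduces rationality directly from Proposition \ref{quotient_M_0}: $\MM_0$ is a dense open subset of $\MM$ identified with an open subset of $W/G \simeq \PP(\mathcal U)$, a Zariski-locally trivial $\PP^9$-bundle over the rational variety $\Grass(2,4)$. Your explicit attention to Zariski-local triviality is a reasonable elaboration of what the paper leaves implicit.
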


\begin{proposition}
\label{complement}
The complement $X$ of $W_0/G$ in $W/G$ consists of two disjoint irreducible components
$X_1 \simeq \PP^1$ and $X_2 \simeq \PP^1 \times \PP^1$.
\end{proposition}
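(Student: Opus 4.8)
The plan is to identify $X$ with the image, under the quotient map $W\to W/G$, of the closed $G$-invariant subset $W\setminus W_0$ consisting of the non-injective morphisms in $W$. A morphism $\f\colon\O(-1,-2)\oplus\O(-1,-1)\to 2\O$ between locally free sheaves of rank $2$ on $\PP^1\times\PP^1$ is injective if and only if its determinant
\[
\det\f=\f_{11}\f_{22}-\f_{12}\f_{21}\in\H^0(\O(2,3))
\]
is nonzero: if $\det\f=0$ the image of $\f$ has generic rank at most one, so $\Ker(\f)\neq 0$, while if $\det\f\neq 0$ then $\Ker(\f)$ is a torsion subsheaf of the torsion-free sheaf $\O(-1,-2)\oplus\O(-1,-1)$, hence zero. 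So I must classify, modulo $G$, the morphisms $\f\in W$ satisfying $\f_{11}\f_{22}=\f_{12}\f_{21}$.

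First I would show that for such a $\f$ both $\f_{12}$ and $\f_{22}$ factor as a product of a form of bidegree $(1,0)$ and a form of bidegree $(0,1)$. Indeed, if $\f_{12}$ were irreducible in the factorial ring $\CC[x,y,z,w]$ it would divide $\f_{11}\f_{22}$; it cannot divide $\f_{22}$, since $\f_{12}$ and $\f_{22}$ are linearly independent forms of the same bidegree $(1,1)$; hence it divides $\f_{11}$, forcing $\f_{11}=\f_{12}(1\tensor u)$ for some $u\in V_2^*$ and then $\f_{21}=\f_{22}(1\tensor u)$, which is excluded by the definition of $W$; the same argument applies to $\f_{22}$. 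Writing $\f_{12}=p_1q_1$ and $\f_{22}=p_2q_2$ with nonzero $p_i\in V_1^*$, $q_i\in V_2^*$, linear independence of $\f_{12},\f_{22}$ forbids $p_1,p_2$ and $q_1,q_2$ from being simultaneously proportional; and if $p_1,p_2$ are independent and $q_1,q_2$ are independent, then $\gcd(\f_{12},\f_{22})=1$ and the same divisibility argument again lands $\f$ in the excluded locus. There remain exactly two cases: (b) $p_1\propto p_2$, say $p_1,p_2\in\langle p\rangle$, so that $\Lambda:=\spann(\f_{12},\f_{22})=\langle p\rangle\tensor V_2^*$; and (c) $q_1\propto q_2$, say $q_1,q_2\in\langle q\rangle$, so that $\Lambda=V_1^*\tensor\langle q\rangle$. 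These conditions are mutually exclusive, and the corresponding planes $\Lambda$ sweep out the two disjoint smooth rational curves $Z_b$ and $Z_c$ in $\Grass(2,V_1^*\tensor V_2^*)$ parametrizing the lines of the two rulings of the Segre quadric $\PP(V_1^*)\times\PP(V_2^*)\subset\PP(V_1^*\tensor V_2^*)$. Writing $\bar\pi\colon W/G\to\Grass(2,4)$ for the bundle projection, we get $\bar\pi(X)\subset Z_b\cup Z_c$, hence $X=X_1\sqcup X_2$ with $X_1:=X\cap\bar\pi^{-1}(Z_c)$ and $X_2:=X\cap\bar\pi^{-1}(Z_b)$ disjoint and closed; it then remains to identify $X_1$ and $X_2$, after which they will automatically be the two irreducible components of $X$.

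For this I would compute the fibres of $X$ in suitable coordinates. In case (c), choosing coordinates so that $q=z$, $p_1=x$, $p_2=y$ (hence $\f_{12}=xz$, $\f_{22}=yz$), the equation $\det\f=0$ forces $\f_{11}=x\chi$, $\f_{21}=y\chi$ with $\chi\in\Sym^2V_2^*$, and the conditions defining $W$ amount to $\chi\notin z\cdot V_2^*$; since an element of $G$ fixing the second column acts on $\chi$ by $\chi\mapsto a\chi+cz$ with $a\in\CC^*$, $c\in V_2^*$, the fibre of $X_1$ over $\Lambda=V_1^*\tensor\langle q\rangle$ is $\PP\big(\Sym^2V_2^*/(\langle q\rangle\cdot V_2^*)\big)$, a single point. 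Hence $X_1\to Z_c$ is a finite, birational, bijective morphism onto the smooth curve $Z_c$, so an isomorphism, and $X_1\simeq Z_c\simeq\PP^1$. In case (b), choosing coordinates so that $p=x$, $q_1=z$, $q_2=w$ (hence $\f_{12}=xz$, $\f_{22}=xw$), the equation $\det\f=0$ forces $\f_{11}=z\psi$, $\f_{21}=w\psi$ with $\psi$ of bidegree $(1,1)$, and the conditions defining $W$ amount to $\psi\notin\langle x\rangle\tensor V_2^*$; since $G$ now acts on $\psi$ by $\psi\mapsto a\psi+cx$, the fibre of $X_2$ over $\Lambda=\langle p\rangle\tensor V_2^*$ is $\PP\big((V_1^*/\langle p\rangle)\tensor V_2^*\big)$, canonically isomorphic to $\PP(V_2^*)$. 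The bundle projection together with this last identification give a morphism $X_2\to Z_b\times\PP(V_2^*)\simeq\PP(V_1^*)\times\PP(V_2^*)$ which is bijective; being finite and birational onto a smooth, hence normal, surface, it is an isomorphism, so $X_2\simeq\PP^1\times\PP^1$.

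The step I expect to be the main obstacle is the careful bookkeeping of the residual $G$-action required to pin down the fibres of $X$ \emph{exactly} as $\PP^1$, respectively a point, and, relatedly, checking that these coordinatewise computations assemble into the coordinate-free statements over $Z_b$ and $Z_c$ that are used in the last step — in particular, that the second projection $X_2\to\PP(V_2^*)$ is a genuine morphism of varieties. Once the fibres are described and the projection morphisms are in place, both identifications follow from the standard fact that a finite birational morphism onto a normal variety is an isomorphism.
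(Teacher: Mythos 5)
Your proposal is correct and takes essentially the same route as the paper: both reduce to classifying $\f \in W$ with $\det\f = 0$ according to the pure-tensor structure of the pair $(\f_{12}, \f_{22})$, eliminate the configurations that force $\f_{11} = \f_{12}(1\tensor u)$, $\f_{21} = \f_{22}(1\tensor u)$, and then compute the fibres over the two rulings ($V_1^*\tensor\langle q\rangle$ giving a point, $\langle p\rangle\tensor V_2^*$ giving $\PP(V_2^*)$), exactly as in the paper's Cases 4 and 5. Your UFD divisibility argument simply streamlines the explicit coefficient computations of the paper's Cases 1--3, and your ZMT-style globalization makes explicit an identification the paper states directly from the fibre and base descriptions.
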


\begin{proof}
Assume that $\f \in W$ and $\det(\f) = 0$. We will examine several cases.

\medskip

\noindent
\emph{Case 1: Assume that $\f \nsim \psi$, where $\psi_{12}$ is a pure tensor.} Then we may write
\[
\f = \left[
\begin{array}{cl}
x \tensor \alpha_1 + y \tensor \alpha_2 & x \tensor z + y \tensor w \\
x \tensor \beta_1 + y \tensor \beta_2 & x \tensor w + y \tensor (az + bw)
\end{array}
\right]
\]
for some $\alpha_1, \alpha_2, \beta_1, \beta_2 \in \Sym^2 V^*_2$, $a, b \in \CC$, $a \neq 0$.
We have
\begin{align*}
\det(\f) = & x^2 \tensor \alpha_1 w + xy \tensor \alpha_1 (az + bw) + xy \tensor \alpha_2 w + y^2 \tensor \alpha_2 (az + bw) \\
& - x^2 \tensor \beta_1 z - xy \tensor \beta_2 z - xy \tensor \beta_1 w - y^2 \tensor \beta_2 w.
\end{align*}
From the relation $\det(\f) = 0$ we get the relations
\begin{align*}
0 & = \alpha_1 w - \beta_1 z, \text{ hence } \alpha_1 = u_1 z,\ \beta_1 = u_1 w \text{ for some } u_1 \in V_2^*, \\
0 & = \alpha_2 (az + bw) - \beta_2 w, \text{ hence } \alpha_2 = u_2 w,\ \beta_2 = u_2 (az + bw) \text{ for some } u_2 \in V_2^*, \\
0 & = \alpha_1 (az + bw) + \alpha_2 w - \beta_2 z - \beta_1 w, \text{ hence} \\
0 & = u_1 z (az + bw) + u_2 w^2 - u_2 z (az + bw) - u_1 w^2, \text{ hence} \\
0 & = (u_1 - u_2)(a z^2 + bzw - w^2), \text{ and hence } u_1 = u_2 = u.
\end{align*}
Thus,
\[
\f = \left[
\begin{array}{ll}
x \tensor zu + y \tensor wu & x \tensor z + y \tensor w \\
x \tensor wu + y \tensor (az + bw)u & x \tensor w + y \tensor (az + bw)
\end{array}
\right] = \left[
\begin{array}{cc}
\f_{12} (1 \tensor u) & \f_{12} \\
\f_{22} (1 \tensor u) & \f_{22}
\end{array}
\right].
\]
This contradicts the choice of $\f \in W$. In Case 1, every $\f \in W$ belongs also to $W_0$.

\medskip

\noindent
\emph{Case 2: Assume that $\f_{12}$ is a pure tensor but $\f \nsim \psi$, where both $\psi_{12}$ and $\psi_{22}$ are pure tensors.}
Then we may write
\[
\f = \left[
\begin{array}{cl}
x \tensor \alpha_1 + y \tensor \alpha_2  & x \tensor z \\
x \tensor \beta_1 + y \tensor \beta_2 & x \tensor w + y \tensor (z + aw)
\end{array}
\right].
\]
We have
\begin{align*}
\det(\f) = & x^2 \tensor \alpha_1 w + xy \tensor \alpha_1 (z + aw) + xy \tensor \alpha_2 w + y^2 \tensor \alpha_2 (z + aw) \\
& - x^2 \tensor \beta_1 z - xy \tensor \beta_2 z.
\end{align*}
From the relation $\det(\f) = 0$ we get the relations
\begin{align*}
\alpha_2 & = 0, \\
\alpha_1 w - \beta_1 z & = 0, \text{ hence } \alpha_1 = uz, \ \beta_1 = uw \text{ for some } u \in V_2^*, \\
\alpha_1 (z + aw) - \beta_2 z & = 0, \text{ hence } \beta_2 = u(z + aw).
\end{align*}
Thus,
\[
\f = \left[
\begin{array}{ll}
x \tensor zu & x \tensor z \\
x \tensor wu + y \tensor (z + aw)u & x \tensor w + y \tensor (z + aw)
\end{array}
\right] = \left[
\begin{array}{cc}
\f_{12}(1 \tensor u) & \f_{12} \\
\f_{22}(1 \tensor u) & \f_{22}
\end{array}
\right].
\]
This contradicts the choice of $\f \in W$. In Case 2, every $\f \in W$ belongs also to $W_0$.

\medskip

\noindent
\emph{Case 3: Assume that $\f_{12} = v_1 \tensor v_2$, $\f_{22} = v_1' \tensor v_2'$ with $\{ v_1^{}, v_1' \}$ linearly independent,
$\{ v_2^{}, v_2' \}$ linearly independent.} Then we may write
\[
\f = \left[
\begin{array}{cc}
x \tensor \alpha_1 + y \tensor \alpha_2 & x \tensor z \\
x \tensor \beta_1 + y \tensor \beta_2 & y \tensor w
\end{array}
\right].
\]
We have
\[
\det(\f) = xy \tensor \alpha_1 w + y^2 \tensor \alpha_2 w - x^2 \tensor \beta_1 z - xy \tensor \beta_2 z.
\]
From the relation $\det(\f) = 0$ we get the relations
\begin{align*}
\alpha_2 & = 0, \\
\beta_1 & = 0, \\
\alpha_1 w - \beta_2 z & = 0, \text{ hence } \alpha_1 = uz, \ \beta_2 = uw \text{ for some } u \in V_2^*.
\end{align*}
Thus,
\[
\f = \left[
\begin{array}{cc}
x \tensor zu & x \tensor z \\
y \tensor wu & y \tensor w
\end{array}
\right] = \left[
\begin{array}{cc}
\f_{12}(1 \tensor u) & \f_{12} \\
\f_{22}(1 \tensor u) & \f_{22}
\end{array}
\right].
\]
This contradicts the choice of $\f \in W$. In Case 3, every $\f \in W$ belongs also to $W_0$.

\medskip

\noindent
\emph{Case 4: Assume that $\f_{12} = v_1 \tensor v$, $\f_{22} = v_1' \tensor v$ with $v \neq 0$ and with $\{ v_1^{}, v_1' \}$ linearly independent.}
Such a morphism $\f$ can be written in the canonical form
\[
\f = \left[
\begin{array}{cc}
x \tensor \alpha_1 + y \tensor \alpha_2 & x \tensor z \\
x \tensor \beta_1 + y \tensor \beta_2 & y \tensor z
\end{array}
\right].
\]
We have
\[
\det(\f) = xy \tensor \alpha_1 z + y^2 \tensor \alpha_2 z - x^2 \tensor \beta_1 z - xy \tensor \beta_2 z.
\]
From the relation $\det(\f) = 0$ we get the relations
\begin{align*}
\alpha_2 & = 0, \\
\beta_1 & = 0, \\
\alpha_1 z - \beta_2 z & = 0, \text{ hence } \alpha_1 = \beta_2 = \alpha.
\end{align*}
Thus,
\[
\f = \left[
\begin{array}{cc}
x \tensor \alpha & x \tensor z \\
y \tensor \alpha & y \tensor z
\end{array}
\right] \sim \left[
\begin{array}{cc}
x \tensor w^2 & x \tensor z \\
y \tensor w^2 & y \tensor z
\end{array}
\right].
\]
The fiber of $X$ over $\spann \{ \f_{12}, \f_{22} \} \in \Grass(2, V_1^* \tensor V_2^*)$ consists of a single point.
The subset
\[
\{ \spann \{ x \tensor v, y \tensor v \},\ v \in V_2^* \setminus \{ 0 \} \} \subset \Grass(2, V_1^* \tensor V_2^*)
\]
is isomorphic to $\PP(V_2^*) \simeq \PP^1$. Thus, we obtain an irreducible component $X_1$ of $X$ isomorphic to $\PP^1$.

\medskip

\noindent
\emph{Case 5: Assume that $\f_{12} = v \tensor v_2$, $\f_{22} = v \tensor v_2'$ with $v \neq 0$ and with $\{ v_2^{}, v_2' \}$ linearly independent.}
Such a morphism $\f$ can be written in the canonical form
\[
\f = \left[
\begin{array}{cc}
x \tensor \alpha_1 + y \tensor \alpha_2 & x \tensor z \\
x \tensor \beta_1 + y \tensor \beta_2 & x \tensor w
\end{array}
\right].
\]
We have
\[
\det(\f) = x^2 \tensor \alpha_1 w + xy \tensor \alpha_2 w - x^2 \tensor \beta_1 z - xy \tensor \beta_2 z.
\]
From the relation $\det(\f) = 0$ we get the relations
\begin{align*}
\alpha_1 w - \beta_1 z = 0, \text{ hence } \alpha_1 = u_1 z,\ \beta_1 = u_1 w \text{ for some } u_1 \in V_2^*, \\
\alpha_2 w - \beta_2 z = 0, \text{ hence } \alpha_2 = u_2 z,\ \beta_2 = u_2 w \text{ for some } u_2 \in V_2^*.
\end{align*}
Thus,
\[
\f = \left[
\begin{array}{cc}
x \tensor u_1 z + y \tensor u_2 z & x \tensor z \\
x \tensor u_1 w + y \tensor u_2 w & x \tensor w
\end{array}
\right] \sim \left[
\begin{array}{cc}
y \tensor u_2 z & x \tensor z \\
y \tensor u_2 w & x \tensor w
\end{array}
\right].
\]
The fiber of $X$ over $\spann \{ \f_{12}, \f_{22} \} \in \Grass(2, V_1^* \tensor V_2^*)$ is parametrized by $u_2$ so it is isomorphic to
$\PP(V_2^*) \simeq \PP^1$. The subset
\[
\{ \spann \{ v \tensor z, v \tensor w \},\ v \in V_1^* \setminus \{ 0 \} \} \subset \Grass(2, V_1^* \tensor V_2^*)
\]
is isomorphic to $\PP(V_1^*) \simeq \PP^1$. Thus, we obtain an irreducible component $X_2$ of $X$ isomorphic to $\PP^1 \times \PP^1$.
\end{proof}

\noindent
{\bf Proof of Theorem \ref{poincare_polynomial}.}
We have
\begin{align*}
\Poly(\MM) & = \Poly(\MM_0) + \Poly(\MM_1) + \Poly(\MM_2) \\
& = \Poly(W/G) - \Poly(X_1) - \Poly(X_2) + \Poly(\MM_1) + \Poly(\MM_2) \\
& = \Poly(\PP^9) \Poly(\Grass(2, 4)) - \Poly(\PP^1) - \Poly(\PP^1 \times \PP^1) + \Poly(\PP^{10}) \Poly(\PP^1 \times \PP^1) + \Poly(\PP^{11}) \\
& = \frac{\xi^{10} - 1}{\xi - 1}(\xi^4 + \xi^3 + 2\xi^2 + \xi + 1) - (\xi + 1) - (\xi + 1)^2 + \frac{\xi^{11} - 1}{\xi - 1}(\xi + 1)^2 \\
& \qquad + \frac{\xi^{12} - 1}{\xi - 1}.
\end{align*}

\end{document}